\newtheorem{theorem}{Theorem}[section]
\newtheorem{lemma}[theorem]{Lemma}
\theoremstyle{definition}
\newtheorem{definition}[theorem]{Definition}
\newcommand{\SL}{\text{SL}}
\theoremstyle{remark}
\newtheorem{remark}[theorem]{Remark}
\numberwithin{equation}{section}
\begin{document}

\title{Asymptotic bounds for special values of shifted convolution Dirichlet series}

\author{Olivia Beckwith}
\address{Department of Mathematics and Computer Science \\ Emory University}
\email{olivia.dorothea.beckwith@emory.edu}

\subjclass[2010]{Primary 11F67, 11F66, 11M41}

%%%%%%%%%%%%%%%%%%%%%%%%%%%%%%%%%%%%%%%%%%%%%%%%%%%%%%%%%%%%%%%%%%%%%%%%%%%%%%%%%%%%%%%%%5
\begin{abstract}
In \cite{HoHu}, Hoffstein and Hulse defined the \emph{shifted convolution series} of two cusp forms by ``shifting" the usual Rankin-Selberg convolution $L$-series by a parameter $h$. We use the theory of harmonic Maass forms to study the behavior in $h$-aspect of certain values of these series and prove a polynomial bound as $h \to \infty$. Our method relies on a result of Mertens and Ono \cite{MeOn}, who showed that these values are Fourier coefficients of mixed mock modular forms. 
\end{abstract}
%%%%%%%%%%%%%%%%%%%%%%%%%%%%%%%%%%%%%%%%%%%%%%%%%%%%%%%%%%%%%%%%%%%%%%%%%%%%%%%%%%%%%%%%%%%%%%%%%%%%5
\maketitle
%%%%%%%%%%%%%%%%%%%%%%%%%%%%%%%%%%%%%%%%%%%%%%%%%%%%%%%%%%%%%%%%%%%%%%%%%%%%%%%%%%%%%%%%%%%%%%%%%%%%%%%%%%%%%
\section{Introduction}
Let $f_1,f_2 \in S_{k} ( \Gamma_0 (N)) $ be cusp forms with $L$-series given by 
$$
L(f_i,s) = \sum_{n=1}^{\infty} \frac{a_i (n) }{n^s}, \quad i = 1,2.
$$
Rankin and Selberg independently defined the \emph{Rankin-Selberg convolution series} $L(f_1 \otimes f_2, s)$ as
$$
L(f_1 \otimes f_2, s) := \sum_{n=1}^{\infty} \frac{a_1 (n) \overline{a_2 (n)}}{n^s} 
$$
for $\Re (s) > k$ and by analytic continuation elsewhere. Rankin-Selberg convolution series were first used to bound Fourier coefficients of cusp forms in the direction of the Ramanujan conjecture, and the idea has also been important in studying the Langlands program. Selberg \cite{Selberg} later defined \emph{shifted convolution $L$-functions}, which have been important in studying the Lindel{\"o}f hypothesis.

In \cite{HoHu} Hoffstein and Hulse defined \emph{shifted convolution series} as follows:
\begin{equation}\label{eq:shiftedseries}
D(f_1, f_2, h;s) := \sum_{n=1}^{\infty} \frac{a_1 (n+h)\overline{a_2(n)}}{n^s}.
\end{equation}
Hoffstein and Hulse established meromorphic continuation for this series and used it to prove strong estimates for certain shifted sums (see Theorem 1.3 of \cite{HoHu}). From these estimates a subconvexity bound for Dirichlet character twists of modular $L$-functions was obtained. 

Shifted convolution sums such as the ones in \cite{HoHu} arise frequently in the theory of automorphic $L$-function and have been studied by many authors, who often use them to prove subconvexity bounds. Duke, Friedlander, and Iwaniec \cite{DuFrIw} were the first to study bounds for shifted convolution sums of Hecke eigenvalues for holomorphic forms and their applications to subconvexity estimates. Harcos \cite{Harcos} extended their work to similar results for Maass forms. Works of Blomer, Harcos, and Michel \cite{Blomer} \cite{BlHaMi} extended the work of \cite{Harcos}, proving a Burgess-type estimate in the latter paper. Note that Blomer \cite{Blomer} showed, as a corollary of his main result, that if  $\epsilon >0$ is fixed, and $h \le M^{\frac{64}{39}- \epsilon}$, there exists $\delta >0$ such that
\begin{equation}\label{eq:Blomer}
\sum_{m \le M} a_1 (m) \overline{a_1(m+h)} \ll_{\epsilon} M^{1 -\delta}.
\end{equation}
\begin{remark}
The convergence of the sum considered in the present work (see equation \ref{eq:defD}) is implied by equation (\ref{eq:Blomer}). Although Blomer only states his result for the case that $f_1 = f_2$, his argument can be extended to the case that $f_1 \neq f_2$.
\end{remark}
%\begin{remark}
%Blomer proved the following subconvexity bound for twists of $L$-functions of primitive cusp forms as an application. Let $L_f(s,\chi)$ be the $L$-function attached to a twist of a primitive cusp form $f \in S_k (N, \chi_1)$ by a character $\chi$ of conductor $D$ with $(D,N) = 1$. Also, let $\theta$ be a constant such that for all eigenvalues $\lambda (n)$ of Hecke operators $T_n$ on the space of weight 0 level $N$ Maass cusp forms, we have $\lambda(n) \le \tau(n) n^{\theta}$.  Then 
%$$L_f \left( \frac{1}{2} + it , \chi \right) \ll_{k,N,t,\epsilon} D^{\frac{2 +2 \theta}{5 + 2 \theta} + \epsilon}.$$  
%Note that the Ramanujan-Petersson conjecture predicts $\theta = 0$, and known bounds for $\theta$ give estimates that are subconvex.
%\end{remark}
These results were extended using automorphic spectral decomposition by Blomer and Harcos in \cite{BlHa} and \cite{BlHa3}. In \cite{BlHa}, a sum very similar to the one studied in \cite{HoHu} and in the present work was considered. In \cite{BlHa3}, a Burgess-type estimate was obtained. Maga \cite{Maga} \cite{Maga2} generalized the bound and Burgess-type bound of \cite{BlHa3} to automorphic $\text{GL}_2$ twisted $L$-functions over general number fields (note that Maga was not the first to obtain a Burgess-type estimate in this generality, but the first to do so using shifted convolution sums). For an overview of these results and their applications to quadratic forms, see \cite{Harcos2}.

We consider \emph{symmetrized shifted convolution series} $\hat{D} (f_1, f_2, h; s)$ for $f_1, f_2 \in S_k(\Gamma_0(N))$, which were first defined by Mertens and Ono \cite{MeOn}. They are defined as follows:
\begin{equation}\label{eq:defD}
\hat{D} (f_1, f_2, h;s) := D(f_1, f_2, h;s) - D( \overline{f_1}, \overline{ f_2}, -h; s).
\end{equation}
This symmetrized series has conditional convergence at $s = k-1$. 

In view of the works described above, it is natural to ask for bounds for the $L$-values in $h$-aspect. Here we use the theory of harmonic Maass forms to obtain a polynomial bound in $h$ for 
$\hat{D} (f_1, f_2, h, k-1)$ as $h \to \infty$. 
\begin{theorem}\label{thm:maintheorem}
Let $f_1,f_2 \in   S_k (\SL_2 (\mathbb{Z}))$.  Then 
$$ |\hat{D} (f_1, f_2, h, k -1)| \ll_{f_1, f_2} h^{\frac{k}{2}}, \quad h \to \infty.$$
\end{theorem}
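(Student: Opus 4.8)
The plan is to exploit the result of Mertens and Ono that the special values $\hat{D}(f_1,f_2,h,k-1)$ are the Fourier coefficients of a \emph{mixed mock modular form}, so that the analytic question about $h \to \infty$ growth becomes a question about the growth of Fourier coefficients of a concretely described (weakly holomorphic or harmonic) modular object. Concretely, I would first invoke the Mertens--Ono identification to write, up to explicit elementary normalizing factors, $\hat{D}(f_1,f_2,h,k-1)$ as the $h$-th coefficient of a form built from $f_1$, $f_2$, and the weight-$\tfrac32$ (or weight-$2-k$) nonholomorphic Eisenstein-type piece that produces the ``mock'' behavior. The key structural point is that a mixed mock modular form of this shape decomposes as a product/combination of a holomorphic part and a piece whose completion is modular, and the generating function lives in a space of forms of a definite (half-integral or integral) weight and level attached to $\Gamma_0(N)$ (here $N=1$).

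Once the generating function is pinned down, the strategy is to bound its $h$-th Fourier coefficient by the standard circle-method / Rankin--Selberg heuristics for coefficients of modular forms of the relevant weight. For a cusp form of weight $w$ the Hecke/Deligne bound gives coefficients $\ll n^{(w-1)/2+\epsilon}$, while for a weakly holomorphic or harmonic form one also has to control the principal-part (polar) contributions. The plan is to split the coefficient into (i) a contribution from the holomorphic cuspidal part of the mixed mock form, and (ii) a contribution from the non-cuspidal/mock part. To get the clean exponent $\tfrac{k}{2}$ I would track the weights carefully: the relevant object has weight essentially $k$ (or is assembled from a weight-$k$ form and a half-integral weight completion), so the dominant coefficient growth should be governed by a weight-$k$ cusp form, whose coefficients satisfy $a(n) \ll n^{(k-1)/2+\epsilon}$ by Deligne, and an additional half-integral factor that nudges the exponent up to $\tfrac{k}{2}$. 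I would therefore extract the precise weight of the generating series, apply the sharpest available coefficient bound for each weight, and combine them multiplicatively (as the Fourier coefficients of a product of forms convolve), arriving at $h^{k/2}$ after summing the convolution.

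The main obstacle I anticipate is controlling the contribution of the \emph{mock} (nonholomorphic) part rather than the pure cusp form part. The completion of a mock modular form involves a nonholomorphic Eichler-integral ``period'' term whose Fourier expansion has coefficients of comparatively large size, and the subtlety is that the special values $\hat{D}(f_1,f_2,h,k-1)$ are coefficients of the \emph{holomorphic} piece, so one must argue that the nonholomorphic correction does not inflate the bound — equivalently, that the holomorphic projection retains polynomial growth. Managing this cleanly is delicate because naive bounds on each summand of the convolution can overshoot; the careful step is to use the explicit shape of the shadow (the relevant weight $2-k$ form) and possibly a holomorphic projection argument to isolate the genuinely growing part. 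A secondary difficulty is uniformity: the bound must be uniform in $h$ with the implied constant depending only on $f_1,f_2$, so every estimate on the principal part and on the cuspidal coefficients must be made $h$-independent, which requires pinning down that the generating form itself does not depend on $h$ (only the index of the coefficient does). Once these two points are handled, assembling the pieces via the triangle inequality and the Cauchy--Schwarz or convolution estimate should yield the stated $h^{k/2}$ bound.
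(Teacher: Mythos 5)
Your starting point (the Mertens--Ono identification of $\hat{D}(f_1,f_2,h,k-1)$ as the $h$-th coefficient of the mixed mock modular form $\mathbb{L}(f_1,f_2;\tau)$) matches the paper, but the core of your plan --- bounding the coefficient by convolving Deligne-type bounds for the coefficients of the individual factors --- does not work, and it misses the actual mechanism of the proof. The holomorphic part $M_{f_1}^+$ of a harmonic Maass form of weight $2-k$ has Fourier coefficients that grow \emph{exponentially} in $n$ (the paper's table for $f_1=f_2=\Delta$ shows $c_\Delta^+(n)/11!$ of size $10^{155}$ at $n=1000$ against a shifted convolution value of size $10^{15}$), so any triangle-inequality or Cauchy--Schwarz treatment of the convolution $\sum_{m} c_{M_{f_1}^+}(m) a_2(h-m)$ overshoots by an exponential factor; the polynomial bound is entirely a cancellation phenomenon. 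Your proposal acknowledges this risk but offers no concrete way around it, and the ``additional half-integral factor that nudges the exponent up to $\tfrac{k}{2}$'' is not a feature of the problem: every object here has integral weight ($M_{f_1}^+$ has weight $2-k$, $f_2$ has weight $k$, and $\mathbb{L}$ has weight $2$), and the exponent $\tfrac{k}{2}$ arises for a different reason.

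The paper's route is as follows, and each step is an idea absent from your sketch. First, one rewrites $(k-1)!\,\mathbb{L}(f_1,f_2;\tau)=\pi_{hol}^{reg}(M_{f_1}^- f_2)$, so that only the \emph{nonholomorphic} part $M_{f_1}^-$ (whose coefficients decay) enters; this is how the exponential growth of $M_{f_1}^+$ is traded away. Second, the $h$-th coefficient is extracted by Cauchy's formula on the horizontal contour at height $y=1/h$, so the problem becomes one of bounding the integrand near the real line rather than bounding a coefficient convolution. Third --- and this is the heart of the argument --- one shows that $M_{f_1}^-(\tau)f_2(\tau)\,y^{k/2}$ is bounded on all of $\mathbb{H}$, which immediately yields the $O(h^{k/2})$ estimate for that piece; the remaining weight-$2$ pieces contribute only $O(h)$. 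Proving this boundedness requires controlling $M_{f_1}^-$ on $\SL_2(\mathbb{Z})$-translates of the fundamental domain, i.e.\ bounding the obstruction to modularity $\mathbb{P}(M_{f_1}^+,\alpha;\tau)$ \emph{uniformly} over all $\alpha\in\SL_2(\mathbb{Z})$. That is Lemma \ref{thm:periodbound}, which rests on the Bringmann--Fricke--Kent formula (Theorem \ref{thm:CKL}) expressing these period functions as polynomials whose coefficients are additively twisted $L$-values $L(f_1,e^{-2\pi i d/c},n+1)$, bounded uniformly in $c$ and $d$. Without some substitute for this uniform period-function estimate, your proposal has no way to control the mock part, so the gap is genuine rather than a matter of presentation.
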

\begin{remark}
Our methods would also work for forms of higher level, but for simplicity we only do the level 1 case here. Additionally, by making use of the full strength of theorem of Mertens and Ono which involves the Rankin-Cohen bracket, our methods could probably be generalized to the case that the weight of $f_1$ is greater than the weight of $f_2$, rather than their weights being equal.
\end{remark}

In Section 2, we briefly go over the necessary ingredients of our theorem: we recall some important properties of harmonic Maass forms in Section \ref{hmf}, we briefly discuss the Eichler-Shimura isomorphism theorem and period polynomials in Section \ref{EichlerShimura}, and we describe the work of Mertens and Ono \cite{MeOn} in Section 2.3. In Section 3 we prove Theorem \ref{thm:maintheorem} and give an example.
\section*{Acknowledgements}
The author thanks the NSF for its support, and thanks Ken Ono and Michael Mertens for several helpful conversations about harmonic Maass forms. Additionally, the author is grateful to the referee and also Gergely Harcos for their useful comments.

\section{Preliminaries}
\subsection{Harmonic Maass Forms}\label{hmf}
A harmonic Maass form is a certain kind of nonholomorphic modular form that has a natural decomposition into a holomorphic and nonholomorphic part. The holomorphic part of a harmonic Maass form is called a mock modular form, and every mock modular form is naturally associated to a cusp form called its shadow. In this section, we define level 1 harmonic Maass forms (harmonic Maass forms of level greater than 1 are defined as the natural generalization of the definition given here) and state some of their important properties. For more on mock modular forms and harmonic Maass forms, see references such as \cite{BrFu} and \cite{Ono}.

Let $\mathbb{H} := \{ x + i y \in \mathbb{C}: y > 0 \}$ be the upper half plane and let $\SL_2 ( \mathbb{Z})$ be the group of $2 \times 2$ determinant one matrices with integer entries. Every $\gamma =  \left( \begin{array}{cc} a & b \\ c & d \ \end{array} \right) \in \SL_2 (\mathbb{Z})$ has an associated action on $\mathbb{H}$ given by
$$ 
\gamma z = \frac{az + b}{cz+d}.
$$
We define the operator $|_k \gamma$ on smooth functions $f : \mathbb{H} \to \mathbb{C}$ by 
$$
(f|_k \gamma ) (\tau) := (c \tau + d)^{-k} f ( \gamma \tau).
$$

The \emph{weight $k$ hyperbolic Laplacian operator} $\Delta_k$ is defined as follows:
$$
\Delta_k := - y^2 \left( \frac{ \partial^2}{\partial x^2} + \frac{\partial^2}{\partial y^2} \right) + i ky \left( \frac{\partial }{\partial x} + i \frac{ \partial }{ \partial y} \right) . 
$$

Bruinier and Funke in \cite{BrFu} first defined harmonic weak Maass forms. 
\begin{definition}
Let $f: \mathbb{H} \to \mathbb{C}$ be real-analytic, and assume that $k \ge 2$ is an even integer. Then $f$ is a \emph{weight $2-k$ harmonic weak Maass form} if the following hold:

(i) $f$ is weight $2-k$ invariant under $\SL_2 (\mathbb{Z})$, that is, for all $\gamma \in \SL_2(\mathbb{Z})$, $(f |_{2-k} \gamma) = f$. 

(ii) The weight $2-k$ hyperbolic Laplacian operator annihilates $f$, that is, $\Delta_{2-k} f = 0$.  

(iii) There is a polynomial $P_f(\tau) \in \mathbb{C} [ q^{-1} ]$ such that $f(\tau) - P_f( \tau ) = O( e^{- \epsilon y})$ as $y \to \infty$. 

\end{definition}

We let $H_{2-k}$ denote the vector space of weight $2-k$ harmonic weak Maass forms. We also let $M_k := M_k( \SL_2 (\mathbb{Z}) )$ (resp. $S_k := S_k (\SL_2(\mathbb{Z}))$) denote the usual space of modular (resp. cusp) forms of weight $k$ with respect to $\SL_2 (\mathbb{Z})$. For convenience, we refer to harmonic weak Maass forms as \emph{harmonic Maass forms}, and omit the word ``weak".

The following important fact is well known (see \cite{BrFu}). 

\begin{theorem}\label{thm:mockmodular}
Every $f \in H_{2-k}$ can be written in the following way:
$$
f ( \tau) = f^+ ( \tau) + \frac{(4 \pi y)^{1-k}}{k-1} \overline{c_0 (y) } + f^- ( \tau),
$$
where $f^+$ and $f^-$ have Fourier expansions as follows, for some $m_0 \in \mathbb{Z}$:
$$
f^+ ( \tau) = \sum_{n = m_0 }^{\infty} c_f^+ (n) q^n,
$$
and
$$
f^- (\tau) = \sum_{\substack{n > 0 }} \overline{c_f^- (n)}  \Gamma ( 1-k, 4 \pi n y) q^{-n}.
$$
\end{theorem}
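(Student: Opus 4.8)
The plan is to read off the stated shape directly from the three defining conditions, treating $\Delta_{2-k}f=0$ as a separation-of-variables problem. First I would use condition (i) for the translation $\left(\begin{smallmatrix}1&1\\0&1\end{smallmatrix}\right)\in\SL_2(\mathbb{Z})$, which gives $f(\tau+1)=f(\tau)$ since the automorphy factor $(c\tau+d)^{-(2-k)}$ is trivial there; hence $f$ is periodic in $x$ and admits a Fourier expansion
\[
f(\tau)=\sum_{n\in\mathbb{Z}} a_n(y)\,e^{2\pi i n x},\qquad a_n(y)=\int_0^1 f(x+iy)\,e^{-2\pi i n x}\,dx .
\]
The crucial point is that the coefficients of $\Delta_{2-k}$ depend only on $y$, so $\Delta_{2-k}$ commutes with translation in $x$ and therefore sends the $n$-th Fourier mode $a_n(y)e^{2\pi i n x}$ to a function of the same frequency. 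Consequently condition (ii) is equivalent to the vanishing of every mode separately, which converts the single equation $\Delta_{2-k}f=0$ into a family of second order linear ordinary differential equations in $y$, one for each $n$.

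Next I would solve these ODEs explicitly. Substituting $a_n(y)e^{2\pi i n x}$ into the displayed formula for $\Delta_{2-k}$ and dividing out the exponential yields, for each $n$, an equation of the form $y\,a_n''+(2-k)\,a_n'-\bigl(4\pi^2n^2 y-2\pi(2-k)n\bigr)a_n=0$. For $n=0$ this degenerates and is spanned by the constant function together with a single power of $y$; the constant produces the constant term of $f^+$, and the power of $y$ produces the nonholomorphic middle term $\tfrac{(4\pi y)^{1-k}}{k-1}\overline{c_0(y)}$. For $n\neq 0$ one solution is the pure exponential $e^{-2\pi n y}$, as I would confirm by direct substitution; multiplied by $e^{2\pi i n x}$ this is the holomorphic monomial $q^n$. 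A second, linearly independent solution I would obtain by reduction of order, and the resulting antiderivative is precisely an incomplete Gamma function in the variable $4\pi|n|y$. Thus every mode is a linear combination of a $q^n$-term and an incomplete-Gamma term, and summing over $n$ gives a provisional expansion with two families of coefficients.

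Finally I would invoke the growth condition (iii) to discard the wrong-growth terms and to fix the ranges of summation. Comparing the asymptotics of the exponential and incomplete-Gamma solutions as $y\to\infty$, the requirement $f-P_f=O(e^{-\epsilon y})$ with $P_f\in\mathbb{C}[q^{-1}]$ forces the holomorphic coefficients $c_f^+(n)$ to vanish for all sufficiently negative $n$, so that $f^+=\sum_{n\ge m_0}c_f^+(n)q^n$ for some $m_0\in\mathbb{Z}$, while the same estimate selects exactly the decaying incomplete-Gamma terms, leaving $f^-=\sum_{n>0}\overline{c_f^-(n)}\,\Gamma(1-k,4\pi n y)\,q^{-n}$. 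Assembling the $n=0$ contribution, the holomorphic modes, and the surviving nonholomorphic modes produces the asserted decomposition. The main obstacle is the $n\neq0$ ODE: one must correctly identify its second solution as an incomplete Gamma function—equivalently, recognize the equation as a rescaled Whittaker equation—and control the growth of both solutions sharply enough that condition (iii) cleanly separates the holomorphic principal part from the nonholomorphic tail.
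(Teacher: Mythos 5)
The paper gives no proof of this theorem at all---it is quoted as ``well known'' with a citation to Bruinier--Funke---so the right comparison is with the standard argument behind that citation, and your proposal is essentially that argument: periodicity from $\left( \begin{smallmatrix} 1 & 1 \\ 0 & 1 \end{smallmatrix} \right)$, separation of variables for $\Delta_{2-k}$ (whose coefficients depend only on $y$), a second-order ODE in $y$ for each Fourier mode, and the growth condition (iii) to discard the growing branch mode by mode. Bruinier--Funke phrase the two solutions as Whittaker functions $M_{\nu,\mu}$ and $W_{\nu,\mu}$; at these integer parameters they reduce to exactly your exponential and incomplete-Gamma solutions, so your route is the same proof in more elementary clothing. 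Your reduction $y\,a_n''+(2-k)\,a_n'-\bigl(4\pi^2n^2y-2\pi(2-k)n\bigr)a_n=0$ is correct, $e^{-2\pi ny}$ does solve it, and reduction of order gives $u'=y^{k-2}e^{4\pi ny}$, so that for a negative mode the substitution $t=4\pi ny$ produces an incomplete Gamma function, as you say. The remaining loose ends are routine: justify termwise application of $\Delta_{2-k}$ by differentiating $a_n(y)=\int_0^1 f(x+iy)e^{-2\pi inx}\,dx$ under the integral, and note that in positive modes the second solution grows like $y^{k-2}e^{2\pi ny}$ (so (iii) kills it), while in negative modes (iii) permits only finitely many growing pure exponentials $q^{-n}$ (these, absorbed into $f^+$, are what produce $m_0$) together with the decaying incomplete-Gamma terms that constitute $f^-$.

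One substantive caveat: carry your computation to the end and you will find it does \emph{not} literally reproduce the formula as printed. Trying $a(y)=\Gamma(s,4\pi ny)\,e^{2\pi ny}$ (i.e.\ $\Gamma(s,4\pi ny)\,q^{-n}$ in mode $-n$) in your ODE forces $s=k-1$, not $1-k$; likewise the second solution of your $n=0$ equation $y\,a_0''+(2-k)\,a_0'=0$ is $y^{k-1}$, not $y^{1-k}$ (for $k\ge 2$, $y^{1-k}$ fails the equation outright). The parameters $1-k$ appearing in the theorem as stated are the weight-$k$ rather than weight-$(2-k)$ convention and appear to be typos in the paper; the corrected values $\Gamma(k-1,4\pi ny)$ and $y^{k-1}$ are also the ones consistent with the paper's Theorem 2.3, since $\xi_{2-k}$ applied to $\overline{c_f^-(n)}\,\Gamma(k-1,4\pi ny)\,q^{-n}$ yields a constant multiple of $c_f^-(n)\,n^{k-1}q^n$. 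Your write-up leaves the Gamma parameter unspecified (``an incomplete Gamma function in the variable $4\pi|n|y$'') and then asserts agreement with the displayed statement; pinning the parameter down, as your own method demands, would have exposed this discrepancy instead of papering over it.
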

In the theorem, $f^+$ is called the \emph{holomorphic part} of $f$, and $\frac{(4 \pi y)^{1-k}}{k-1} \overline{c_0 (y) } + f^- ( \tau)$ is called the \emph{nonholomorphic part} of $f$. When the nonholomorphic part is nontrivial, $f^+$ is called a \emph{mock modular form}.

The following theorem, due to Bruinier and Funke \cite{BrFu}, explains why we conjugate the coefficients of the nonholomorphic part in Theorem \ref{thm:mockmodular}. 
\begin{theorem}[Bruinier, Funke]
The operator $\xi_{2-k} : H_{2-k} \to S_k $  given by $\xi_{2-k}    = 2i y^{2-k} \frac{ \overline{\partial}}{\partial \overline{\tau}}$ is well defined and surjective. Moreover, for $f \in H_{2-k} $,
$$
\xi_{2-k} ( f)  = -( 4 \pi)^{k-1} \sum_{n>0} c_f^- (n) n^{k-1} q^n \in S_k  ,
$$
where $c_f^- (n)$ and $n_0$ are as in Theorem \ref{thm:mockmodular}.
\end{theorem}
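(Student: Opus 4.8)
The plan is to prove three things: that $\xi_{2-k}$ lands in the correct space (weight $k$, holomorphic, and cuspidal), that it is given by the stated Fourier formula, and that it is surjective. The first two are essentially one direct computation carried out on the expansion of Theorem \ref{thm:mockmodular}, while surjectivity is where the real work lies.

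First I would establish the weight via the intertwining relation $\xi_\kappa(f|_\kappa\gamma) = (\xi_\kappa f)|_{2-\kappa}\gamma$, valid for every $\gamma = \left(\begin{smallmatrix} a & b \\ c & d\end{smallmatrix}\right) \in \SL_2(\mathbb{Z})$. This is a chain-rule computation: since $c\tau+d$ and $\gamma\tau$ are holomorphic in $\tau$, applying $\partial_{\bar\tau}$ to $(f|_\kappa\gamma)(\tau) = (c\tau+d)^{-\kappa} f(\gamma\tau)$ produces only a factor $\overline{(c\tau+d)}^{-2}$, and combined with the identity $\operatorname{Im}(\gamma\tau) = y\,|c\tau+d|^{-2}$ the automorphy factors rearrange into exactly the weight $2-\kappa$ slash action. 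Taking $\kappa = 2-k$ and using $f|_{2-k}\gamma = f$ gives invariance of $\xi_{2-k} f$ under the weight $k$ action. For holomorphy I would verify by direct computation the factorization $\Delta_{2-k} = -\xi_k \circ \xi_{2-k}$; granting it, harmonicity $\Delta_{2-k} f = 0$ reads $\xi_k(\xi_{2-k} f) = 0$, and since $\xi_k g = 2iy^k \overline{\partial_{\bar\tau} g}$ vanishes precisely when $\partial_{\bar\tau} g = 0$, the form $\xi_{2-k} f$ is holomorphic. The same remark identifies $\ker \xi_{2-k}$ with the weakly holomorphic modular forms of weight $2-k$, a fact I will reuse for surjectivity.

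For the explicit formula and cuspidality I would apply $\xi_{2-k}$ term by term. The holomorphic part $f^+$ lies in $\ker \partial_{\bar\tau}$ and is annihilated, so only the nonholomorphic part contributes. Differentiating it I use $\frac{d}{dx}\Gamma(s,x) = -x^{s-1}e^{-x}$ together with $\partial_{\bar\tau} y = i/2$; the crucial simplification is that the exponential coming from the incomplete Gamma function combines with $q^{-n}$ through $e^{-4\pi n y} q^{-n} = \overline{q^n}$. After multiplying by the weight factor $2iy^{2-k}$ the powers of $y$ cancel and conjugation returns $\overline{q^n}$ to $q^n$, leaving the holomorphic series $-(4\pi)^{k-1}\sum_{n>0} c_f^-(n)\, n^{k-1} q^n$. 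Since this $q$-expansion has only strictly positive exponents and is weight $k$ modular by the previous step, it is a cusp form, which simultaneously proves the formula and the inclusion $\xi_{2-k}(H_{2-k}) \subseteq S_k$.

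The main obstacle is surjectivity. For a cusp form $g = \sum_{n\ge 1} b(n) q^n \in S_k$ I would produce a preimage. The naive candidate is the nonholomorphic Eichler integral, whose nonholomorphic part is, up to a constant, $\sum_{n\ge1}\overline{b(n)}\, n^{1-k}\Gamma(k-1,4\pi n y)\,q^{-n}$, and which by the computation above maps to $g$ under $\xi_{2-k}$. The difficulty is that this integral is modular of weight $2-k$ only up to a period polynomial, so it is not literally an element of $H_{2-k}$. Resolving this — either by correcting with a weakly holomorphic piece, exploiting the freedom afforded by $\ker\xi_{2-k}$, or, more cleanly, by realizing $g$ as a linear combination of Maass–Poincaré series whose $\xi_{2-k}$-images are explicit multiples of the classical Poincaré series that span $S_k$ — is the heart of the argument and is precisely where the Eichler–Shimura and period-polynomial theory of Section \ref{EichlerShimura} enters. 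Since $S_k(\SL_2(\mathbb{Z}))$ is spanned by Poincaré series, this yields surjectivity and completes the proof.
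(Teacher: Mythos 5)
The paper does not prove this statement; it is imported verbatim from Bruinier--Funke \cite{BrFu}, so there is no internal proof to compare against. Judged on its own, your outline is the standard argument and its computational core is sound: the intertwining relation $\xi_\kappa(f|_\kappa\gamma)=(\xi_\kappa f)|_{2-\kappa}\gamma$, the factorization $\Delta_{2-k}=-\xi_k\circ\xi_{2-k}$, and the term-by-term differentiation of the Fourier expansion are exactly how the formula and the inclusion into weight $k$ holomorphic forms are established. One detail to flag: the cancellation of the powers of $y$ that you invoke works only if the nonholomorphic part involves $\Gamma(k-1,4\pi ny)$, the correct normalization for weight $2-k$; with $\Gamma(1-k,4\pi ny)$ as literally printed in the paper's Theorem 2.2 your computation would leave a stray factor $y^{2-2k}$, so you are (rightly) using the standard convention rather than the paper's typographical one. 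You should also say a word about the term $\frac{(4\pi y)^{1-k}}{k-1}\overline{c_0(y)}$, which is the only possible source of a constant term in the image; the growth condition (iii) in the definition disposes of it and gives cuspidality, but as written you pass over it silently.

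The genuine gap is surjectivity, which you yourself identify as ``the heart of the argument'' and then do not carry out. The naive Eichler integral does fail for the reason you give, and the repair cannot come from $\ker\xi_{2-k}=M^!_{2-k}$ alone, since adding a weakly holomorphic form changes nothing about the period-polynomial obstruction of the nonholomorphic piece. The Maass--Poincar\'e route you name is the right one: for $k>2$ (the only case with $S_k(\SL_2(\mathbb{Z}))\neq 0$) the weight $2-k$ Maass--Poincar\'e series converges, is annihilated by $\Delta_{2-k}$, satisfies the growth condition, and maps under $\xi_{2-k}$ to an explicit nonzero multiple of the classical weight $k$ Poincar\'e series; combined with the fact that these span $S_k$, surjectivity follows. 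But each of those assertions is a computation or a lemma, and none appears in your write-up, so as it stands the surjectivity half is a correct plan rather than a proof. Note also that Eichler--Shimura theory is not actually needed here --- it quantifies the obstruction you want to avoid, rather than removing it.
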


For any $F \in H_{2 -k} $, the cusp form $ (- 4 \pi)^{k-1} \sum_{n =n_0 }^{\infty} c_f^-(n) q^n \in S_k $  is called the \emph{shadow} of the mock modular form $F^+$. We say that $F$ is \emph{good} for $f$ if $f$ is the shadow of $F^+$ and $F(\tau) f(\tau)$ is bounded at all cusps. Note that there are many cusp forms $f$ for which there is no mock modular form that is good for $f$.

\subsection{Period Functions}\label{EichlerShimura}
We require some facts about period polynomials and their relationship to the obstructions to modularity for mock modular forms. We first recall the definition and important properties of periods polynomials.

\begin{definition}[Period Polynomial]
Let $f \in S_{k} (\Gamma_0 (N))$ be a cusp form where $k \ge 2$ is even. We define the \emph{$n$th period of $f$} by 
$$
r_n (f) := \int_{0}^{i \infty} f( it ) t^n dt.
$$
 
The period polynomial of $f$ is defined by
$$
r(f;z) := r^+ (f, z) + i r^- (f,z),
$$
where
$$
r^- (f,z) = \sum_{\substack{ 0 \le n \le k -2 \\ 2 \nmid n}} (-1)^{\frac{n-1}{2}} {{k-2}\choose{n}} r_n(f) z^{k-2-n}
$$ 
and
$$
r^+ (f, z) = \sum_{\substack{ 0 \le n \le k-2 \\ 2 | n}} (-1)^{\frac{n}{2}} {{k-2}\choose{n}} r_n (f) z^{k-2-n}.  $$
\end{definition}
\begin{remark}
One can show that $L(f,n+1) = \frac{(2 \pi)^{n+1}}{n!} r_n (f)$, where $L(f,s)$ is the $L$-series for $f$.
\end{remark}

The Eichler-Shimura isomorphism theorem and the work of Kohnen and Zagier \cite{KoZa} imply that the maps $ r^+$ and $r^-$ define correspondences between $S_{k} (\Gamma_0 (N))$ and explicitly defined subspaces of the vector space of degree $k-2$ polynomials with coefficients in $\mathbb{C}$. These important bijections can be used to efficiently compute spaces of cusp forms.

Bringmann, Guerzhoy, Kent, and Ono in \cite{BGKO} connected period polynomials to the theory of harmonic Maass forms. They showed that the obstruction to modularity for a mock modular form can be described in terms of the periods of its shadow. 

\begin{definition}[Period Function]
Let $F^+ (\tau)$ be a mock modular form of weight $2-k$ with respect to $\SL_2 (\mathbb{Z})$, and $\gamma \in \SL_2 ( \mathbb{Z} )$. The \emph{period function of $F^+$ with respect to $\gamma$} is defined as follows:
$$
\mathbb{P} (F^+ , \gamma ; \tau) := \frac{ (4 \pi)^{k-1}}{\Gamma ( k-1)} ( F^+ - F^+|_{2-k} \gamma))(\tau).
$$
\end{definition}

\begin{theorem}[Bringmann, Guerzhoy, Kent, Ono]
Let $S = \left( \begin{array}{cc} 0 & -1 \\ 1 & 0 \end{array} \right)$. Then we have that the period function with respect to $S$ is given by
$$
\overline{\mathbb{P} (F^+, S; \tau )} = \sum_{n=0}^{k-2} \frac{L(f,n+1)}{(k-2-n)!} (2 \pi i \tau)^{k-2-n}. 
$$
\end{theorem}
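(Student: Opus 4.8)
The plan is to use the modularity of the harmonic Maass form $F \in H_{2-k}$ whose holomorphic part is $F^+$, and to recognize the resulting failure of modularity as a period integral of the shadow $f \in S_k$. Write $F = F^+ + F^{nh}$ with nonholomorphic part $F^{nh}(\tau) = \frac{(4\pi y)^{1-k}}{k-1}\overline{c_0(y)} + F^-(\tau)$. Since $F$ is weight $2-k$ invariant under $\SL_2(\mathbb{Z})$, in particular $F|_{2-k}S = F$, and subtracting gives
\[
F^+ - F^+|_{2-k}S = -\left(F^{nh} - F^{nh}|_{2-k}S\right).
\]
Hence, up to the normalizing constant $\frac{(4\pi)^{k-1}}{\Gamma(k-1)}$ in the definition of $\mathbb{P}$, the entire problem reduces to computing how far the nonholomorphic part is from being modular under $S$.

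First I would invoke the Bruinier--Funke description of the nonholomorphic part in terms of the shadow. Comparing the incomplete-gamma Fourier expansion of $F^-$ in Theorem \ref{thm:mockmodular} with the Fourier expansion of $f = \xi_{2-k}(F)$ shows that $F^-$ is, up to an explicit constant, the nonholomorphic Eichler integral of $f$, an integral of the schematic shape $\int_{-\bar\tau}^{i\infty}\overline{f(-\bar z)}\,(z+\tau)^{k-2}\,dz$; the exact normalization is pinned down by matching coefficients. The analytic content here is that this integral converges and represents $F^-$, which follows from the rapid decay of the cusp form $f$ at the cusp.

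Next I would carry out the nonholomorphic analogue of the classical Eichler--Shimura computation. Because $f$ is genuinely modular of weight $k$ and $S\tau = -1/\tau$ interchanges the cusps $i\infty$ and $0$, applying $|_{2-k}S$ moves the $\tau$-dependent lower endpoint to $0$, so the difference collapses to a single period integral,
\[
F^{nh} - F^{nh}|_{2-k}S = c\int_{0}^{i\infty}\overline{f(-\bar z)}\,(z+\tau)^{k-2}\,dz,
\]
for an explicit constant $c$. Expanding $(z+\tau)^{k-2}$ by the binomial theorem and substituting $z = it$ turns each summand into the conjugate period $\overline{r_n(f)} = \overline{\int_0^{i\infty} f(it)\,t^n\,dt}$ weighted by $\binom{k-2}{n}$ and a power of $i$, so the difference is a degree $k-2$ polynomial in $\tau$ whose coefficients are built from the periods of $f$.

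Finally I would translate periods into $L$-values and reconcile constants. Using $r_n(f) = \frac{n!}{(2\pi)^{n+1}}L(f,n+1)$ from the Remark, together with $\binom{k-2}{n} = \frac{(k-2)!}{n!(k-2-n)!}$ and $\Gamma(k-1) = (k-2)!$, the factorials cancel and the powers of $2\pi$ from $(4\pi)^{k-1} = 2^{k-1}(2\pi)^{k-1}$ combine with those from the $L$-value to produce exactly $(2\pi)^{k-2-n}$; the coefficient of $\tau^{k-2-n}$ then becomes $\frac{L(f,n+1)}{(k-2-n)!}(2\pi i)^{k-2-n}$. The conjugation on the left-hand side of the theorem is exactly what converts the conjugate periods $\overline{r_n(f)}$ produced by the nonholomorphic integral back into the honest periods that define $L(f,n+1)$. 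I expect the main obstacle to be precisely this bookkeeping --- tracking the conjugations, the powers of $i$, and the normalization of the Eichler integral so that everything lands in the stated clean form --- rather than any conceptual difficulty, since the conceptual heart is simply that the modular obstruction of the mock modular form $F^+$ is governed, via Eichler--Shimura, by the period polynomial of its shadow.
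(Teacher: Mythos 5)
This theorem is quoted in the paper from \cite{BGKO} without proof, so there is no internal argument to compare against; judged on its own terms, your outline is correct and is essentially the argument of Bringmann--Guerzhoy--Kent--Ono: reduce the non-modularity of $F^+$ to that of the nonholomorphic part via the modularity of the full harmonic Maass form, identify the nonholomorphic part with the nonholomorphic Eichler integral $(2i)^{1-k}\int_{-\bar\tau}^{i\infty}\overline{f(-\bar z)}(z+\tau)^{k-2}\,dz$ of the shadow, use modularity of $f$ to collapse the difference under $|_{2-k}S$ to a single integral from $0$ to $i\infty$, and expand binomially to land on periods and hence $L$-values. Two small points deserve explicit mention in a complete write-up. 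First, the term $\frac{(4\pi y)^{1-k}}{k-1}\overline{c_0}$ in the decomposition must be disposed of: it is present exactly when the shadow has a nonzero constant term, and since here the shadow $f$ is a cusp form this term vanishes, so the nonholomorphic part really is just the Eichler integral of $f$. Second, the bar in $\overline{\mathbb{P}(F^+,S;\tau)}$ should be read as conjugation of the coefficients of the polynomial only (consistent with how Theorem \ref{thm:CKL} is stated, where the $L$-values appear conjugated but the variable does not); a literal complex conjugation would also replace $\tau$ by $\bar\tau$ on the right-hand side. Neither point is a gap in the idea, only in the bookkeeping you already flagged as the main chore.
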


We require a generalization of this result due to Bringmann, Fricke, and Kent in \cite{BrFrKe}. Among other results, they proved that the period functions corresponding to other modular transformations are also polynomials whose coefficients are essentially values of additive twists of $L(f,s)$. 

Let $L(f, e^{- 2 \pi i d/c};s)$ be defined for $c \neq 0$, $c,d \in \mathbb{Z}$ by 
$$L(f, e^{-2 \pi i d/c}, s):=\sum_{n=1}^{\infty} \frac{e^{-2 \pi i d n /c} a_n }{n^s}$$ for $\Re(s)$ sufficiently large and by analytic continuation elsewhere. The analytic continuation is given by
$$L(f, e^{- 2 \pi i d /c};s) = \frac{(2 \pi)^s}{\Gamma(s)} \int_0^{\infty} f \left(iy - \frac{d}{c} \right) dy.$$ 

\begin{theorem}[Bringmann, Fricke, Kent]\label{thm:CKL}
Let $\gamma = \left( \begin{array}{cc} a & b \\ c & d \end{array} \right) \in \SL_2 (\mathbb{Z})$ satisfy $c \neq 0$. Then
\begin{equation}\label{eq:periodpoly}
  \mathbb{P} ( F^+, \gamma, z)  = \sum_{n=0}^{k-2} \frac{\overline{L(f, e^{-2 \pi i d /c}, n+1 )}}{(k-2-n)!} (-2 \pi i )^{k-2-n} \left(\frac{c z + d}{c}\right)^{k-2-n}. 
\end{equation}
\end{theorem}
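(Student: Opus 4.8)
The plan is to exploit the modularity of the full harmonic Maass form $F = F^+ + F^{\mathrm{nh}}$, where $F^{\mathrm{nh}}$ denotes the entire nonholomorphic part, in order to convert the period function—which is built only from the mock piece $F^+$—into an object built from the shadow $f$. Since $F|_{2-k}\gamma = F$ for every $\gamma \in \SL_2(\mathbb{Z})$, one has $F^+ - F^+|_{2-k}\gamma = F^{\mathrm{nh}}|_{2-k}\gamma - F^{\mathrm{nh}}$, so that
$$\mathbb{P}(F^+,\gamma;\tau) = \frac{(4\pi)^{k-1}}{\Gamma(k-1)}\bigl(F^{\mathrm{nh}}|_{2-k}\gamma - F^{\mathrm{nh}}\bigr)(\tau).$$
The whole problem is thereby reduced to understanding how the nonholomorphic part transforms under $\gamma$.

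The key input is an integral (Eichler) representation of $F^{\mathrm{nh}}$ in terms of its shadow. Writing $g(w) := \overline{f(-\bar w)}$ and using the Fourier expansion of Theorem \ref{thm:mockmodular} together with the incomplete-gamma identity $\int_y^\infty e^{-2\pi n t}(t+y)^{k-2}\,dt = (2\pi n)^{1-k}e^{2\pi n y}\,\Gamma(k-1,4\pi n y)$, I would verify directly that
$$F^{\mathrm{nh}}(\tau) = \frac{-1}{(2i)^{k-1}}\int_{-\bar\tau}^{i\infty} g(w)\,(w+\tau)^{k-2}\,dw.$$
The point is that $g$ is holomorphic on $\mathbb{H}$ and transforms exactly like a weight $k$ cusp form, $g(\gamma w) = (cw+d)^k g(w)$, because $-\overline{\gamma w} = \gamma'(-\bar w)$ for the conjugate matrix $\gamma' = \bigl(\begin{smallmatrix} a & -b \\ -c & d\end{smallmatrix}\bigr) \in \SL_2(\mathbb{Z})$; indeed $g$ is just the cusp form with conjugated Fourier coefficients. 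Since $f$ is cuspidal the integral converges and reproduces the $n>0$ part, while the constant term requires separate, minor, attention.

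With this representation in hand I would carry out the standard Eichler--Shimura period computation: substituting $w = \gamma' v$ and using $g(\gamma' v) = (-cv+d)^k g(v)$ together with $dw = (-cv+d)^{-2}\,dv$, the factor $(c\tau+d)^{k-2}$ coming from the slash cancels against the automorphy factors, and after recombining the $\tau$-dependent lower limits the difference $F^{\mathrm{nh}}|_{2-k}\gamma - F^{\mathrm{nh}}$ collapses to a single period integral along the vertical geodesic over the cusp $d/c$. Expanding $(w+\tau)^{k-2}$ about $w = d/c$ via the binomial theorem turns this into a polynomial in $\tau$ of degree $k-2$ in the powers $(\tau + d/c)^{k-2-n} = \bigl(\tfrac{c\tau+d}{c}\bigr)^{k-2-n}$, whose coefficients are the periods $\int_{d/c}^{i\infty} g(w)\,(w - d/c)^{n}\,dw$. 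On the vertical geodesic $g$ restricts to $\overline{f(iy - d/c)}$, so these periods are precisely the complex conjugates of the additive-twist $L$-values, which the excerpt's identity $L(f,e^{-2\pi i d/c},n+1) = \frac{(2\pi)^{n+1}}{n!}\int_0^\infty f(iy-d/c)y^{n}\,dy$ evaluates as $\overline{L(f,e^{-2\pi i d/c},n+1)}$; this normalization also supplies the constants $(-2\pi i)^{k-2-n}/(k-2-n)!$, matching \eqref{eq:periodpoly}.

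The main obstacle is precisely this contour manipulation. One must justify the change of variables, split $\int_{-\bar\tau}^{i\infty}$ into a $\tau$-dependent holomorphic tail (annihilated by the finite difference, hence contributing nothing to the polynomial) plus the fixed period over $[d/c,\,i\infty]$, control convergence at the cusp, and track every power of $2i$, $4\pi$, and $\Gamma(k-1)$ along with the two conjugations so that the constants match exactly. A cleaner but essentially equivalent route, since the $S$-case is already available as the preceding theorem, is to use the Eichler cocycle relation $\mathbb{P}(F^+,\gamma_1\gamma_2) = \mathbb{P}(F^+,\gamma_1) + \mathbb{P}(F^+,\gamma_2)|_{2-k}\gamma_1$ together with $\mathbb{P}(F^+,T) = 0$ to induct over words in $S$ and $T$; the additive twists $e^{-2\pi i d/c}$ then emerge from the translations by $T$, although extracting the clean closed form this way demands the same bookkeeping.
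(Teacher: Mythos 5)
The paper does not prove this statement at all: it is quoted verbatim from Bringmann--Fricke--Kent \cite{BrFrKe}, so there is no internal proof to compare against. Your sketch is, in outline, exactly the argument of that reference: write the nonholomorphic part as the Eichler integral $(2i)^{1-k}\int_{-\bar\tau}^{i\infty} g(w)(w+\tau)^{k-2}\,dw$ with $g(w)=\overline{f(-\bar w)}$, use modularity of the completed form $F$ to trade $F^+-F^+|_{2-k}\gamma$ for the corresponding difference of nonholomorphic parts, change variables by the conjugate matrix $\gamma'$ so that the two integrals collapse to a single period integral along the geodesic from $d/c$ to $i\infty$, and expand binomially about $d/c$ to recover the additive twists $\overline{L(f,e^{-2\pi i d/c},n+1)}$. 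All the structural steps check out (the identity $-\overline{\gamma w}=\gamma'(-\bar w)$, the telescoping $\int_{-\bar\tau}^{d/c}-\int_{-\bar\tau}^{i\infty}=-\int_{d/c}^{i\infty}$, and the evaluation of $\int_{d/c}^{i\infty}g(v)(v-d/c)^n\,dv$ as a conjugated twisted $L$-value), and you correctly isolate the two places where care is needed: the normalization constants and the constant ($y^{k-1}$) term of the nonholomorphic part, which vanishes here precisely because the shadow is cuspidal. Two small cautions: you should verify the constants rather than assert they match, and in your alternative route the cocycle relation is written in the wrong order --- the correct identity is $\mathbb{P}(F^+,\gamma_1\gamma_2)=\mathbb{P}(F^+,\gamma_2)+\mathbb{P}(F^+,\gamma_1)|_{2-k}\gamma_2$, and extracting the closed form \eqref{eq:periodpoly} from an induction on words in $S$ and $T$ is considerably messier than the direct integral computation, so the first route is the one to carry out.
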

%%%%%%%%%%%%%%%%%%%%%%%%%%%%%%%%%%%%%%%%%%%%%%%%%%%%%%%%%%%%%%%%%%%%%%%%%%%%%%
\subsection{Work of Mertens and Ono}
Mertens and Ono related the values $\hat{D} (f_1, f_2, h; k-1)$ to the theory of harmonic Maass forms by studying the generating function 
$$\mathbb{L}(f_1, f_2; \tau) := \sum_{h=1}^{\infty} \hat{D} (f_1, f_2, h; k - 1) q^h,
$$
where $q = e^{2 \pi i \tau}$ for $\tau \in \mathbb{H}$. They proved that $\mathbb{L} (f_1,f_2; \tau)$ is the sum of a a weakly holomorphic modular or quasimodular form and the product of a mock modular form and a cusp form. 

To state their theorem, we need to define a few spaces. Let $\widetilde{M}_{k} (\Gamma_0 (N))$ be as follows for even $k \ge 2$:
\begin{equation}\label{eq:holomorphicforms}
\widetilde{M}_k (\Gamma_0(N)) := \left\{ \begin{array}{ll} M_k (\Gamma_0 (N))& \quad \mbox{if $k \ge 4$} \\ M_2(\Gamma_0 (N)) \oplus \mathbb{C} E_2 & \quad \mbox{if $k=2$} \end{array} \right\}
\end{equation}
Moreover, let $\widetilde{M}_k^! (\Gamma_0(N))$ be the extension of $\widetilde{M}_k ( \Gamma_0(N))$ by the weight $k$ weakly holomorphic modular forms on $\Gamma_0 (N)$.  A weakly holomorphic modular form is a meromorphic modular form whose poles are supported at cusps.

\begin{theorem}\label{thm:MertensOno}
For $f_1,f_2 \in S_{k}( \Gamma_0 (N))$,  we have
\begin{equation}\label{eq:Lgenfcn}
\mathbb{L} (f_1 , f_2; \tau) = - \frac{1}{(k - 1)!} M_{f_1}^+ (\tau)  f_2 (\tau)  + F(\tau), 
\end{equation}
where $M_{f_1}^+$ is a mock modular form whose shadow is $f_1$ and $F ( \tau) \in \widetilde{M}_{2}^! (\Gamma_0 (N) )$. If $M_{f_1}$ is \emph{good} for $f_2$, then $F \in \widetilde{M}_{2} (\Gamma_0 (N))$. 
\end{theorem}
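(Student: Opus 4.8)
The plan is to realize the generating function $\mathbb{L}(f_1,f_2;\tau)$ as the holomorphic projection of a weight-$2$ nonholomorphic modular form built from a harmonic Maass form attached to $f_1$. By surjectivity of $\xi_{2-k}$ (Bruinier--Funke), I would fix a harmonic Maass form $M_{f_1} \in H_{2-k}$ whose shadow is $f_1$, so that $M_{f_1} = M_{f_1}^+ + M_{f_1}^-$ with $M_{f_1}^+$ mock modular and, by Theorems \ref{thm:mockmodular} and the Bruinier--Funke theorem, $M_{f_1}^-(\tau) = \sum_{n>0}\overline{c^-(n)}\,\Gamma(1-k,4\pi n y)\,q^{-n}$, where the shadow relation forces $c^-(n)$ to be an explicit constant multiple of $a_1(n)/n^{k-1}$. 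I then form the product $\Phi := M_{f_1}\,f_2$; since $M_{f_1}$ has weight $2-k$ and $f_2$ has weight $k$, the function $\Phi$ transforms with weight $2$ under $\Gamma_0(N)$ and is real-analytic of moderate growth.

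Next I would apply the holomorphic projection operator $\pi_{\mathrm{hol}}$ to $\Phi$ and compute $\pi_{\mathrm{hol}}(\Phi)$ in two ways. On the structural side, holomorphic projection of a weight-$2$ modular object of moderate growth lands in $\widetilde{M}_2^!(\Gamma_0(N))$, with the quasimodular Eisenstein series $E_2$ appearing precisely because weight $2$ is the boundary case. On the Fourier side, I would split $\Phi = M_{f_1}^+ f_2 + M_{f_1}^- f_2$: the holomorphic piece $M_{f_1}^+ f_2$ is fixed coefficientwise by $\pi_{\mathrm{hol}}$ up to constant-term corrections, while the coefficient of $q^h$ in $M_{f_1}^- f_2$ is $\sum_{n>0}\overline{c^-(n)}\,a_2(n+h)\,\Gamma(1-k,4\pi n y)$. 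Feeding this into the Fourier-coefficient formula for $\pi_{\mathrm{hol}}$ at weight $2$ reduces the $h$-th coefficient to the integral $\int_0^\infty \Gamma(1-k,4\pi n y)\,e^{-4\pi h y}\,dy$, which evaluates in closed form; summing over $n$ and using $c^-(n)\propto a_1(n)/n^{k-1}$ produces exactly the symmetrized shifted convolution value $\hat{D}(f_1,f_2,h;k-1)$, with both the special point $s=k-1$ and the antisymmetrization of \eqref{eq:defD} emerging from this evaluation.

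Equating the two computations of $\pi_{\mathrm{hol}}(\Phi)$ and solving for the mock-modular contribution yields
$$\mathbb{L}(f_1,f_2;\tau) = -\frac{1}{(k-1)!}\,M_{f_1}^+(\tau)\,f_2(\tau) + F(\tau),$$
with $F$ given by $\pi_{\mathrm{hol}}(\Phi)$ together with explicit constant-term corrections, so that $F \in \widetilde{M}_2^!(\Gamma_0(N))$. For the final claim I would analyze growth at the cusps: a mock modular form generically grows exponentially, so $M_{f_1}^+ f_2$, and hence $F$, is only weakly holomorphic in general. However, the hypothesis that $M_{f_1}$ is \emph{good} for $f_2$ means exactly that $\Phi = M_{f_1} f_2$ is bounded at every cusp, which forces the principal parts of $\pi_{\mathrm{hol}}(\Phi)$ and of $M_{f_1}^+ f_2$ to cancel, giving $F \in \widetilde{M}_2(\Gamma_0(N))$.

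The main obstacle I anticipate is the weight-$2$ degeneracy of holomorphic projection. At weight $2$ the defining integral fails to converge absolutely, so $\pi_{\mathrm{hol}}$ must be regularized, for instance via a Hecke-style auxiliary $s$-parameter with Rankin--Selberg unfolding, and it is precisely this regularization that explains the conditional convergence of $\hat{D}$ at $s=k-1$ noted after \eqref{eq:defD} and that forces the appearance of $E_2$ and the enlarged space $\widetilde{M}_2^!$. Carefully tracking the regularization in order to justify interchanging the limit with Fourier summation, and to pin down the constant term exactly, is the delicate technical heart of the argument.
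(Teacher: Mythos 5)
Theorem~\ref{thm:MertensOno} is quoted from Mertens--Ono \cite{MeOn} rather than proved in this paper, but your outline reconstructs essentially the argument the paper attributes to them: fixing $M_{f_1}$ with shadow $f_1$ via $\xi_{2-k}$, identifying $F$ as the regularized holomorphic projection of $M_{f_1}f_2$ (exactly as in the remark that $F = \frac{1}{(k_1-1)!}\pi_{hol}^{reg}(M_{f_1}^+\cdot f_2)(\tau)$ and the identity $\pi_{hol}^{reg}(M_{f_1}f_2) - M_{f_1}^+f_2 = \pi_{hol}^{reg}(M_{f_1}^-f_2)$ used in Section~3), evaluating the incomplete-gamma integrals coefficientwise to produce the symmetrized values $\hat{D}(f_1,f_2,h;k-1)$, and invoking goodness to remove the cuspidal singularities so that $F \in \widetilde{M}_2(\Gamma_0(N))$. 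Your proposal is correct in its essentials and follows the same route as the cited proof, including correctly flagging the weight-$2$ regularization and the provenance of $E_2$ as the delicate points.
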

\begin{remark}
They actually prove a more general result for when $f_1$ and $f_2$ are cusp forms of weight $k_1$ and $k_2$ respectively with $k_1 \ge k_2$. In this case, the formula involves the Rankin-Cohen bracket $ [M_{f_1}^+ (\tau),  f_2 (\tau)]_{\frac{k_1 - k_2}{2}}$, and the form $F$ lies in $\widetilde{M}^!_{\frac{k_1 - k_2}{2}} ( \Gamma_0 (N))$. 
\end{remark}

The form $F(\tau)$ in Theorem \ref{thm:MertensOno} can be described as the image of $M_{f_1} f_2$ under a modified holomorphic projection operator. Recall that if $f$ is a smooth weight $k \ge 2$ modular form for $\Gamma_0(N)$ with moderate growth at cusps, then its holormorphic projection $\pi_{hol} f$ lies in $\widetilde{M}_k (\Gamma_0(N))$. For more on the classical holomorphic projection operator, see \cite{Sturm}, \cite{ImRaRi}, \cite{Mertens} and \cite{GrZa}.

The regularized holormorphic projection operator $\pi_{hol}^{reg}$ is an extension of $\pi_{hol}$ to an operator on smooth modular forms with certain exponential singularities at cusps. This definition is due to Mertens and Ono \cite{MeOn} who based it on Borcherds' \cite{Borcherds} regularized Petersson inner product. 
\begin{definition}{Regularized Holomorphic Projection}
Let $f: \mathbb{H} \to \mathbb{C}$ be real-analytic, weight $k \ge 2$ modular with respect to $\Gamma_0 (N)$, and have Fourier series $\sum_{n \in \mathbb{Z}} a_f (n,y) q^n$ . Let the cusps of $\Gamma_0(N)$ be denoted $\kappa_1, \cdots, \kappa_s$ where $\kappa_1 = i \infty$. For each $\kappa_j$, fix some $\gamma_j \in \SL_2 (\mathbb{Z})$ with $\gamma_j \kappa_j = i \infty$.  Suppose that for each $\kappa_j$, there is a polynomial $H_{\kappa_j} (X) \in \mathbb{C} [X]$ such that 
$$
(f|_k \gamma_j^{-1})(\tau) - H_{\kappa_j} (q^{-1}) = O(v^{- \epsilon}),
$$
for some $\epsilon >0$. Also, suppose $a_f (n,y) = O(y^{2-k})$ as $y \to 0$ for all $n >0$. Then we define the \emph{regularized holomorphic projection} of $f$ by
$$
( \pi_{hol}^{reg} f) = H_{i \infty} (q^{-1}) + \sum_{n =1}^{\infty} c(n) q^n,
$$
where
$$
c(n) = \lim_{s \to 0} \frac{ (4 \pi n)^{k-1}}{(k-2)!} \int_0^{\infty} a_f(n,y) e^{- 4 \pi n y} y^{k-2-s} dy.
$$
\end{definition} 
It turns out that if $f$ is a real analytic modular form, $\pi_{hol}^{reg} f$ is a weakly holomorphic modular or quasimodular form.
\begin{theorem}[Mertens, Ono]
Suppose $f$ is as in the previous definition. Then $\pi_{hol}^{reg} (f)$ lies in $\widetilde{M}_{k}^! (\Gamma_0(N))$.
\end{theorem}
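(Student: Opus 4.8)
The plan is to recover $\pi_{hol}^{reg}(f)$ from its Fourier coefficients by recognizing each $c(n)$ as a regularized Petersson inner product of $f$ against a holomorphic Poincar\'e series, and then to reconstruct the form using the completeness of these Poincar\'e series in the finite-dimensional space $S_k$ together with the existence of a weakly holomorphic form carrying the prescribed principal parts. Write $g := \pi_{hol}^{reg}(f) = H_{i\infty}(q^{-1}) + \sum_{n\geq 1} c(n) q^n$; the goal is to show $g \in \widetilde{M}_{k}^!(\Gamma_0(N))$. I would treat first the case $k>2$, where for each $n\geq 1$ the Poincar\'e series $P_{k,n}(\tau) := \sum_{\gamma\in\Gamma_\infty\backslash\Gamma_0(N)} (q^n)|_k\gamma$ converges absolutely, lies in $S_k$, and these series span $S_k$.

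The first step is the standard unfolding. Writing the regularized pairing as $\langle f, P_{k,n}\rangle^{reg} := \lim_{s\to 0}\int_{\Gamma_0(N)\backslash\mathbb{H}} f(\tau)\overline{P_{k,n}(\tau)}\, y^{k-s}\,\tfrac{dx\,dy}{y^2}$ in the sense of Borcherds, unfolding against $\Gamma_\infty$ collapses the integral to $\lim_{s\to0}\int_0^\infty a_f(n,y)\, e^{-4\pi n y}\, y^{k-2-s}\,dy$, so that $c(n) = \tfrac{(4\pi n)^{k-1}}{(k-2)!}\langle f, P_{k,n}\rangle^{reg}$. Here the hypothesis $a_f(n,y)=O(y^{2-k})$ as $y\to 0$ guarantees convergence of the unfolded integral in the region $y\to 0$ coming from the remaining cusps, while the insertion of $y^{-s}$ with the limit $s\to 0$ is precisely Borcherds' device for absorbing the growth as $y\to\infty$ produced by the principal parts $H_{\kappa_j}$; one must verify that the unfolding remains legitimate despite this growth, which is exactly where the exponential-singularity hypotheses of the preceding definition are used.

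Next I would reconstruct $g$. Since $S_k$ is finite dimensional, the linear functional $h\mapsto \langle f,h\rangle^{reg}$ on $S_k$ is represented by a unique cusp form $g_0\in S_k$; pairing $g_0$ against $P_{k,n}$ and applying the classical formula for the Fourier coefficients of a cusp form in terms of Poincar\'e series identifies the $n$-th coefficient of $g_0$ with $\tfrac{(4\pi n)^{k-1}}{(k-2)!}\langle f,P_{k,n}\rangle^{reg}=c(n)$. Thus $\sum_{n\geq1}c(n)q^n=g_0$ is genuinely a cusp form. To incorporate the principal parts, I would produce a form $\Phi\in\widetilde{M}_{k}^!(\Gamma_0(N))$ whose principal part at each cusp $\kappa_j$ equals $H_{\kappa_j}$; such a $\Phi$ exists because the principal parts of the modular object $f$ automatically satisfy the required duality constraints, namely that their pairing against each element of $S_k$ vanishes, which follows from a contour-integration argument exploiting the modularity of $f$. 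Subtracting $\Phi$ reduces $f$ to a form of moderate growth, to which the classical holomorphic projection applies, and adding $\Phi$ back shows that $g$ differs from $\Phi$ by an element of $S_k$; hence $g\in\widetilde{M}_{k}^!(\Gamma_0(N))$.

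Finally, the weight $k=2$ case requires separate treatment: the Poincar\'e series and the inner product no longer converge absolutely and must be defined by Hecke's trick through analytic continuation in $s$, and the quasimodular Eisenstein series $E_2$ enters, which is exactly why the target is the enlarged space $\widetilde{M}_{2}^!(\Gamma_0(N))$ rather than the space of weakly holomorphic weight $2$ forms alone. I expect the main obstacle to be the analytic bookkeeping in the regularized unfolding: justifying the interchange of the (for $k=2$ only conditionally convergent) Poincar\'e sum with the integral, proving that the regularized integral is finite and equals $c(n)$, and verifying that the boundary terms produced by Borcherds' regularization are precisely accounted for by the principal parts $H_{\kappa_j}$ together with the $E_2$-component. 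Establishing admissibility of the prescribed principal parts, so that $\Phi$ actually exists, is the second delicate point, and both hinge on deploying the modularity and growth hypotheses on $f$ with care.
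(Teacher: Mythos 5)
The paper does not actually prove this statement; it is quoted from Mertens--Ono \cite{MeOn}, so there is no internal proof to compare against. Judged on its own terms, your proposal starts from the classically correct idea (Sturm/Gross--Zagier holomorphic projection via Petersson pairing against Poincar\'e series), but it contains a fatal gap at its core. The identity $c(n) = \tfrac{(4\pi n)^{k-1}}{(k-2)!}\langle f, P_{k,n}\rangle^{reg}$, and hence your conclusion that $\sum_{n\ge 1} c(n)q^n$ lies in $S_k$, is false in general. Take $N=1$, $k\ge 4$, and $f = E_k$: this satisfies every hypothesis of the preceding definition with $H_{i\infty}\equiv 1$, and a direct computation of the integral formula gives $c(n) = a_{E_k}(n)$, so $\pi_{hol}^{reg}(E_k) = E_k$, whose tail $E_k - 1$ is not a cusp form. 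On the other hand $\langle E_k, P_{k,n}\rangle = 0$ by orthogonality of Eisenstein series to cusp forms (and no regularization is even needed here, since the integral converges absolutely). The discrepancy is not a bookkeeping issue: unfolding $P_{k,n}$ against a form that fails to decay at the cusps is not absolutely convergent, and the formal unfolded value differs from the true inner product. That difference is precisely the mechanism by which Eisenstein components --- and, in weight $2$, the quasimodular $E_2$ --- enter the holomorphic projection; it is the entire reason the target space is $\widetilde{M}_k^!(\Gamma_0(N))$ rather than ``cusp form plus principal part.'' Note that the paper's own example exhibits exactly this phenomenon: $(k-1)!\,\mathbb{L}(\Delta,\Delta;\tau)$ contains a nonzero multiple of $E_2$, which your architecture (cuspidal tail $g_0$ plus a separately constructed weakly holomorphic $\Phi$ carrying the principal parts) cannot produce; indeed if the tail really were $g_0\in S_k$, then $g - \Phi = g_0 - (\text{holomorphic tail of }\Phi)$ need not lie in $\widetilde{M}_k$, so the two halves of your argument do not even fit together.

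A secondary gap: the existence of $\Phi$ with prescribed principal parts is asserted on the grounds that the duality obstructions vanish ``by a contour-integration argument exploiting the modularity of $f$,'' but $f$ is only real-analytic, so the residue-theorem pairing with weight $2-k$ forms does not apply to it directly; this needs a genuine argument (and in the end the realizability of the principal parts is a consequence of the theorem, not an available input). A correct proof must instead first subtract from $f$ explicit modular objects (Poincar\'e-type series attached to each exponential term $q^{-m}$ at each cusp, and Eisenstein series for the constant terms) whose regularized projections are computed directly and land in $\widetilde{M}_k^!$, reducing to a remainder that decays at every cusp --- only then is the unfolding against $P_{k,n}$ legitimate and the residual tail genuinely cuspidal.
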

\begin{remark}
In Theorem \ref{thm:MertensOno}, we have $$F(\tau) = \frac{1}{(k_1 - 1)!} \pi_{hol}^{reg}  (M_{f_1}^+ \cdot f_2 )(\tau).$$
\end{remark}

%%%%%%%%%%%%%%%%%%%%%%%%%%%%%%%%%%%%%%%%%%%%%%%%%%%%%%%%%%%%%%555
%%%%%%%%%%%%%%%%%%%%%%%%%%%%%%%%%%%%%%%%%%%%%%%%%%%%%%%%%%%%%%%%%%
\section{Proof of Theorem \ref{thm:maintheorem}}
In this section we prove Theorem \ref{thm:maintheorem}. First we prove a lemma which gives a bound for the obstruction to modularity for a mock modular form. Throughout the section, let $\mathcal{F}$ denote the usual fundamental domain for $\mathbb{H}$, given by
$$
\mathcal{F} := \{ z \in \mathbb{H}: |z| >1,  -1 \le \Re(z) <1 \}.
$$ 

\subsection{Lemma}
We prove an estimate for $\mathbb{P} ( M_{f}^+, \alpha; \tau)$ (see Section \ref{EichlerShimura} for the definition). 

\begin{lemma}\label{thm:periodbound}
Let $f \in S_{k}$ be a cusp form, and $M_{f}^+$ be a harmonic Maass form whose shadow is $f$. Then there exists a constant $C(f) >0$ such that for all $\alpha = \left( \begin{array}{cc} a & b \\ c & d \end{array} \right) \in \SL_2 (\mathbb{Z})$ with $c \neq 0$ and $\tau \in \mathcal{F}$, we have
$$|\mathbb{P}(M_f^+, \alpha; \tau)| \le C(f) |c\tau + d|^{k-2}. $$
\end{lemma}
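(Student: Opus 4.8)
The plan is to argue directly from the explicit description of the period function furnished by Theorem~\ref{thm:CKL}. Since $\frac{c\tau+d}{c}=\tau+\frac{d}{c}$, that result expresses
$$\mathbb{P}(M_f^+,\alpha;\tau)=\sum_{n=0}^{k-2}\frac{\overline{L(f,e^{-2\pi i d/c},n+1)}}{(k-2-n)!}\,(-2\pi i)^{k-2-n}\Bigl(\tau+\tfrac{d}{c}\Bigr)^{k-2-n}.$$
As $f$ is fixed, so are $k$ and the number of summands, so it is enough to bound each summand by $\ll_f|c\tau+d|^{k-2}$ and take for $C(f)$ the sum of the implied constants. This separates the estimate into an elementary comparison of powers and a uniform bound, over all coprime pairs $(c,d)$ with $c\neq0$, for the additive twist special values $L(f,e^{-2\pi i d/c},n+1)$ with $0\le n\le k-2$.

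For the elementary comparison, observe that $d/c$ is real, so $\operatorname{Im}(\tau+\frac{d}{c})=\operatorname{Im}(\tau)$; by the standard lower bound $\operatorname{Im}(\tau)\ge\frac{\sqrt{3}}{2}$ on $\mathcal{F}$ we have $|\tau+\frac{d}{c}|\ge\frac{\sqrt{3}}{2}$. Together with $|c\tau+d|=|c|\,|\tau+\frac{d}{c}|$ this gives
$$\Bigl|\tau+\tfrac{d}{c}\Bigr|^{k-2-n}=\frac{|c\tau+d|^{k-2}}{|c|^{k-2}}\,\Bigl|\tau+\tfrac{d}{c}\Bigr|^{-n}\le\Bigl(\tfrac{2}{\sqrt{3}}\Bigr)^{n}\frac{|c\tau+d|^{k-2}}{|c|^{k-2}},$$
so the $n$-th summand is $\ll_{f}\frac{|L(f,e^{-2\pi i d/c},n+1)|}{|c|^{k-2}}\,|c\tau+d|^{k-2}$. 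Thus the whole lemma reduces to the uniform bound $|L(f,e^{-2\pi i d/c},n+1)|\ll_f|c|^{k-2}$.

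This last bound is the analytic heart of the argument and the main obstacle, the difficulty being uniformity in the pair $(c,d)$. Here I would use the integral representation
$$L(f,e^{-2\pi i d/c},n+1)=\frac{(2\pi)^{n+1}}{n!}\int_{0}^{\infty}f\Bigl(iy-\tfrac{d}{c}\Bigr)\,y^{n}\,dy,$$
which converges for every $n$ because the cusp form $f$ decays exponentially at both $i\infty$ and at the rational cusp $-d/c$. Splitting at $y=1$, the tail $\int_1^\infty$ is $O_f(1)$ by the expansion of $f$ at $i\infty$. For $\int_0^1$ I would use that $\alpha$ sends $-d/c$ to $i\infty$: at $\tau=iy-\frac{d}{c}$ one has $c\tau+d=icy$ and $\alpha\tau=\frac{a}{c}+\frac{i}{c^2y}$, so modularity of $f$ yields
$$f\Bigl(iy-\tfrac{d}{c}\Bigr)=(icy)^{-k}\,f\Bigl(\tfrac{a}{c}+\tfrac{i}{c^{2}y}\Bigr).$$
The crucial feature is that $c\tau+d=icy$ is independent of $d$, which is exactly what produces uniformity in $d$. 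After the substitution $u=1/(c^2y)$, bounding $f$ by $\ll_f e^{-2\pi\operatorname{Im}}$ where $\operatorname{Im}\ge1$ and by the global cuspidal bound $|f|\ll_f\operatorname{Im}^{-k/2}$ elsewhere, the power of $|c|$ that emerges is $\ll|c|^{\max(0,\,k-2n-2)}$, which is $\le|c|^{k-2}$ since $n\ge0$. Careful bookkeeping of these exponents is the only genuinely delicate point; it gives $|L(f,e^{-2\pi i d/c},n+1)|\ll_f|c|^{k-2}$ and completes the proof.
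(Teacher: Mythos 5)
Your proof is correct, and although it opens exactly as the paper does---invoking Theorem \ref{thm:CKL}, extracting the factor $(c\tau+d)^{k-2}$, and using $|c|\ge 1$ together with $\Im(\tau)\ge\frac{\sqrt3}{2}$ on $\mathcal F$---it treats the key analytic input by a genuinely different route. The paper disposes of the additive twists by asserting that $x\mapsto\int_0^\infty f(iy-x)y^{s-1}\,dy$ is a continuous periodic function of $x$, hence bounded, so that $L(f,e^{-2\pi i d/c},n+1)=O_f(1)$ uniformly in $(c,d)$; you instead prove the polynomial bound $|L(f,e^{-2\pi i d/c},n+1)|\ll_f |c|^{\max(0,\,k-2n-2)}\le |c|^{k-2}$ by splitting the integral representation and applying the modular transformation $f(iy-\tfrac dc)=(icy)^{-k}f(\tfrac ac+\tfrac i{c^2y})$ near $y=0$. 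Your bound is weaker than the paper's claim but amply sufficient, because each summand carries the compensating factor $|c|^{-(k-2-n)}|c\tau+d|^{-n}\ll |c|^{-(k-2)}$---and it is arguably on firmer footing: for $n+1\le k/2$ the integral the paper invokes need not converge for irrational $x$ (one only has $|f(iy-x)|\ll y^{-k/2}$ there), and the functional equation for additive twists indicates that $|L(f,e^{-2\pi i d/c},1)|$ genuinely grows like $|c|^{k-2}$, so uniform boundedness in $(c,d)$ cannot hold for the small values of $n$; your computation sidesteps this entirely. The only point to tidy in your write-up is the exponent bookkeeping you defer: carrying it out as you indicate gives $|c|^{k-2n-2}$ for $n<\frac k2-1$, a harmless extra $\log|c|$ at the boundary case $n=\frac k2-1$, and $O_f(1)$ for larger $n$, all of which are $\ll |c|^{k-2}$ as required.
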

\begin{proof}
By Theorem \ref{thm:CKL}, we have
\begin{equation}\label{eq:lemma1}
\frac{\mathbb{P} ( M_f^+, \alpha; \tau)}{(c \tau + d)^{k-2}}= \sum_{n=0}^{k-2} \frac{\overline{L(f, e^{- 2 \pi i d/c}, n+1)}}{(k-2-n!)} (- 2 \pi i )^{k-2-n} \left( \frac{1}{c^{k-2-n} (c \tau + d)^n} \right).
\end{equation}
One can show that $\int_0^{\infty} f(iy - x) y^{s-1} dy$ is a periodic continuous function in $x$, thus for fixed $n$  the values $L(f, e^{- 2 \pi i d/c};n+2)$ can be bounded  independently of $c$ and $d$. Since $|c| \ge 1$ and $|c \tau + d| \ge \frac{\sqrt{3}}{2}$ for $\tau \in \mathcal{F}$, the right hand side of equation \ref{eq:lemma1} is bounded uniformly in $\alpha$ and $\tau \in \mathcal{F}$. 
\end{proof}
%%%%%%%%%%%%%%%%%%%%%%%%%%%%%%%%%%%%%%%%%%%%%%%%%%%%%%%%%%%%%%%%%%%%%%%%%%%%%%
%%%%%%%%%%%%%%%%%%%%%%%%%%%%%%%%%%%%%%%%%%%%%%%%%%%%%%%%%%%%%%%%%%%%%%%%%%%%%%%%%%%%%%%%%%5
%%%%%%%%%%%%%%%%%%%%%%%%%%%%%%%%%%%%%%%%%%%%%%%%%%%%%%%%%%%%%%%%%%%%%%%%%%%%%%%%%%%%%%%%%%%%%%%%%%%%%%%
\subsection{Proof of Theorem \ref{thm:maintheorem}}

Let $F( \tau) := \pi_{hol}^{reg} (  M_{f_1}  f_2  ) - M_{f_1}^+   f_2  =  \pi_{hol}^{reg} ( M_{f_1}^-  f_2   )$. By Theorem \ref{thm:MertensOno}, we have $ F(\tau) = (k - 1)! \cdot \mathbb{L}  ( f_1, f_2; \tau)$.

Since $F$ is holomorphic, by Cauchy's integral formula the coefficients of $\mathbb{L} (f_1, f_2, \tau)$ are given by a contour integral as follows.
\begin{align*}
(k-1)! \hat{D}(f_1, f_2, h; k-1) &= \frac{1}{2 \pi i} \int_C \frac{F(\tau) }{q^{h+1}}  d q \\
&= \int_0^1 F\left(x + \frac{i}{h}\right) e^{- 2 \pi i h (x + (i/h))}  d x. 
\end{align*}

Choose $\beta \in \mathbb{C}$ so that $G(\tau) := \pi_{hol}^{reg} (M_{f_1}^- f_2) - \beta E_2 (\tau)$ lies in $M_2^! (\SL_2 (\mathbb{Z}))$, and let $E_2^* (\tau)$ be the completed weight 2 nonholomorphic modular form $E_2^* (\tau) = E_2 (\tau) - \frac{3}{\pi \Im(\tau)}$. We rewrite the integral in the previous expression as follows:
\begin{align*}
(k-1)! \hat{D}(f_1, f_2, h; k-1) &= \int_0^1 e^{- 2 \pi i h (x + (i/h))} \left(\beta E_{2}^* \left(x + \frac{i}{h} \right) + G\left(x + \frac{i}{h} \right)  \right) dx \\
& - \int_0^1 e^{- 2 \pi i h (x + (i/h))} M_{f_1} \left(x + \frac{i}{h} \right) f_2 \left ( x + \frac{i}{h} \right)   d x \\
&+ \int_0^1 M_{f_1}^- \left(x + \frac{i}{h} \right) f_2 \left( x + \frac{i}{h} \right) e^{- 2 \pi i h (x + (i/h))} d x \\
&- \beta \int_0^1 e^{- 2 \pi i h (x + (i/h))} \frac{3}{\Im ( x + \frac{i}{h}) \pi }  d x.
\end{align*}

By direct evaluation, the fourth integral is 0.

The difference of the first and second integrals satisfies an $O(h)$ estimate. This follows from the fact that the difference of the integrands is a smooth weight 2 modular form which vanishes as $e^{2 \pi i \tau}$ as $\tau \to i \infty$. 

To complete the proof, it is sufficient to show that the third integral satisfies an $O(h^{\frac{k}{2}} )$ estimate, and for this it is sufficient to establish that $h(\tau) := M_{f_1}^- ( \tau) f_2 ( \tau)y^{\frac{k}{2}}$ is bounded on $\mathbb{H}$. 

As $\tau \to i\infty$, $h(\tau)$ has exponential decay because of the exponential decay of $f_2$. Thus, $h$ is bounded on the fundamental domain $\mathcal{F}$.

It is sufficient to show that for $\alpha = \left( \begin{array}{cc} a & b \\ c & d \ \end{array} \right) \in \SL_2 (\mathbb{Z})$, $h ( \alpha \tau)$ is bounded on $\mathcal{F}$ uniformly with respect to $\alpha$. 
Rewriting $|h(\alpha \tau)|$ using the modular invariance of $|f_2 (\tau) \Im (\tau)^{\frac{k}{2}} |$, we have

$$
|h(\alpha \tau)| = |f_2 (\tau) M_{f_1}^- (\alpha \tau)| | \Im (\tau ) |^{\frac{k}{2}}.
$$

Substituting 
$$
M_{f_1}^- ( \tau) + \mathbb{P} ( f, \alpha, \tau) =  M_{f_1}^- |_{2-k} (\alpha) (\tau)
$$

gives 
$$
|h (\alpha \tau)| \le  |f_2 (\tau) \Im (\tau) |^{\frac{k}{2}}  \left( |\frac{M_{f_1}^- ( \tau)}{(c \tau + d)^{k-2}}| + |\frac{1}{(c \tau + d)^{k-2}} \mathbb{P} ( f, \alpha; \tau)| \right).
$$

The second factor is bounded on $\mathcal{F}$  because of Lemma \ref{thm:periodbound}, the fact that $|c \tau + d| \ge \frac{\sqrt{3}}{2}$ on $\mathcal{F}$, and the exponential decay of $M_{f_1}^- (\tau)$ as $\tau \to i \infty$.   On the other hand, $f_2 (\tau) |\Im (\tau) |^{\frac{k}{2}}$ has exponential decay at $i \infty$. Thus, $|h ( \alpha \tau)|$ is bounded for $\tau \in \mathcal{F}$. This completes the proof.

\subsection{Example}
When $f_1 = f_2 = \Delta$, where $\Delta( \tau)$ is the modular discriminant, that is, the unique normalized cusp form of weight 12, Theorem \ref{thm:MertensOno} says
$$
\mathbb{L}(\Delta, \Delta; \tau) = \frac{ Q^+ (-1,12,1;\tau) \Delta (\tau)}{11! \beta} - \frac{E_2 (\tau)}{\beta} = 33.38465... q + 266.447... q^2 + \cdots ,
$$
where $Q^+ (-1,12,1;\tau)$ is the holomorphic part of the Maass-Poincare series of weight 12 and level 1 with a simple pole at $i \infty$. It follows from Theorem \ref{thm:maintheorem} that the Fourier coefficients of $\mathbb{L}(\Delta, \Delta; \tau)$ grow as $O(n^6)$. The following table illustrates the significant cancellation that occurs. Here, $c_{\Delta}^+ (n)$ denotes $n$th Fourier coefficient of $Q^+ (-1,12,1;\tau)$, which grows exponentially with $n$.

\begin{table}[h!]
\begin{tabular}{|c|c|c|c|c|}
\hline
$n$ & $1$ & $10$ & $100$  & $1000$ \\
\hline
$c_{\Delta}^+ (n) / 11!$ & $-1842.89....$ & $4.94... 10^{10}$ & $5.19... 10^{42}$ & $1.30... 10^{155}$ 
\\
\hline
$\hat{D}(\Delta,\Delta,n;11)$ & $33.384 ...$ & $538192.6 ...$ &  $80949379532.2...$ & $5.4234... 10^{15}$ \\
\hline
\end{tabular}
\vspace{0.5cm}
\caption{Numerics for Theorem \ref{thm:maintheorem}}
\end{table}

\end{document}